\documentclass[11pt,a4paper]{amsart}

\usepackage[margin=1in]{geometry}
\usepackage{amsmath,amssymb,mathtools}
\usepackage{microtype}
\usepackage[colorlinks=true,linkcolor=blue,citecolor=blue,urlcolor=blue]{hyperref}

\newtheorem{theorem}{Theorem}[section]
\newtheorem{proposition}[theorem]{Proposition}
\newtheorem{lemma}[theorem]{Lemma}
\theoremstyle{remark}

\newcommand{\Pp}{\mathbb P}

\title[Strongly 2-primitive sets]{The Second Term for Strongly 2-Primitive Sets}
\author[Przemek Chojecki]{Przemek Chojecki$^{*}$\\[-1pt]
  {\small\normalfont\MakeLowercase{\href{https://ulam.ai}{ulam.ai}}}}
\thanks{$^{*}$AI assistance was used in exploring the argument and in writing this text.}
\subjclass[2020]{11B75, 05C65}
\keywords{strongly 2-primitive set, multiplicative basis, linear hypergraph}
\date{}

\begin{document}

\begin{abstract}
Let $F(n)$ be the largest size of a set $A\subseteq[1,n]$ such that $a\nmid bc$ whenever $a,b,c\in A$ and $a\notin\{b,c\}$, with $b$ and $c$ allowed to coincide.  We prove
\[
 F(n)=\pi(n)+\left(\frac{27}{2}+o(1)\right)\frac{n^{2/3}}{(\log n)^2}.
\]
This determines the second-order constant conjectured by Erd\H{o}s; the upper bound keeps the leading constants in his multiplicative basis, while the lower bound packs scale-separated prime triples by proper edge-colourings.
\end{abstract}

\maketitle

\section{Introduction}

A set of positive integers is called \emph{strongly $2$-primitive} if no member divides the product of two other members, where the latter two need not be distinct.  Thus $A$ is strongly $2$-primitive precisely when
\begin{equation}\label{eq:strong}
 a\nmid bc\qquad(a,b,c\in A,\ a\ne b,\ a\ne c).
\end{equation}
This terminology distinguishes the condition from the more recent convention in which $b$ and $c$ must be distinct; see, for example, \cite{CLP}.  Erd\H{o}s \cite{Erdos1938} proved that the maximum $F(n)$ in the abstract satisfies
\[
 \pi(n)+c_1\frac{n^{2/3}}{(\log n)^2}
 \leq F(n)\leq
 \pi(n)+c_2\frac{n^{2/3}}{(\log n)^2}
\]
for absolute constants $c_1,c_2>0$.  He later asked whether the error term has an asymptotic constant \cite{Erdos1969}; this is also Erd\H{o}s Problem 793 \cite{Bloom793}.

\begin{theorem}\label{thm:main}
As $n\to\infty$,
\[
 F(n)=\pi(n)+\left(\frac{27}{2}+o(1)\right)
 \frac{n^{2/3}}{(\log n)^2}.
\]
\end{theorem}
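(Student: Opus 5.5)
The plan is to prove matching bounds $F(n)\le \pi(n)+(\tfrac{27}{2}+o(1))n^{2/3}/(\log n)^2$ and $F(n)\ge \pi(n)+(\tfrac{27}{2}-o(1))n^{2/3}/(\log n)^2$ separately. The constant suggests the shape of the extremal configuration. One has $\pi(n^{1/3})^2/2\sim \tfrac92\, n^{2/3}/(\log n)^2$, and $\tfrac{27}{2}=3\cdot\tfrac92$. So the surplus over $\pi(n)$ should come from numbers built out of primes of size about $n^{1/3}$, organised in triples, with each triple yielding roughly three usable products. I have not checked this heuristic in detail.

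\emph{Upper bound.} I would follow Erd\H{o}s's multiplicative-basis argument and keep its leading constants. Every $m\le n$ factors as $m=xy$ with $x,y$ in the basis
\[
B=\{\text{primes }p\le n\}\cup[1,n^{2/3}].
\]
Condition \eqref{eq:strong} lets one attach to each $a\in A$ a factor $x(a)\in B$ that is private: it divides no other member of $A$ in the relevant way, since otherwise $a$ would divide a product $bc$. Injectivity of $a\mapsto x(a)$ gives $|A|\le |B|$ in its crude form. To sharpen this, I would split $A$ according to how many prime factors exceed $n^{1/3}$ (roughly).
\begin{itemize}
\item Elements with a prime factor $p>n^{2/3}$ are charged to $p$ and contribute at most $\pi(n)-\pi(n^{2/3})$.
\item Elements all of whose prime factors are at most $n^{1/3}/(\log n)^{C}$ lie in a sparse set. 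Using \eqref{eq:strong} together with a divisor-counting bound, they should contribute $o(n^{2/3}/(\log n)^2)$.
\item The main range consists of elements whose large prime factors lie in $[n^{1/3-\epsilon},n^{2/3}]$. Here I would build an auxiliary graph or $3$-uniform hypergraph on these primes, with one edge for each element of $A$ recorded by its pair or triple of large prime factors.
\end{itemize}
Condition \eqref{eq:strong} forces this hypergraph to be essentially linear: two elements sharing two large primes would let one divide the product of two others. A double count over primes in dyadic windows, weighted by $1/\log$, should then cap the excess over $\pi(n)$ at $\tfrac{27}{2}$ times the target. The primes that are used up inside composite elements must be subtracted from $\pi(n)$, and this trade-off is exactly what produces the constant.

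\emph{Lower bound.} Start from all primes up to $n$. In dyadic blocks of primes near $n^{1/3}$, choose an almost-perfect linear $3$-uniform hypergraph $H$; for example, use a Steiner-type triple system obtained by R\"odl nibble or an explicit design. The triples should be scale-separated, say $p<q<r$ in distinct blocks, so that the relevant products stay below $n$. A proper edge-colouring of $H$ with few colours, available by Vizing/Pippenger--Spencer since $H$ is linear, then lets each colour class select, for every vertex, which product it participates in. For each triple $\{p,q,r\}$ one removes a bounded number of the primes involved and inserts more products than were removed, gaining about $3$ elements per $\binom{\cdot}{2}$-type pair budget. I would verify \eqref{eq:strong} directly:
\begin{itemize}
\item a product $a$ dividing $bc$ forces its prime factors to occur in $b$ and $c$;
\item linearity of $H$ and the colouring rule rule this out;
\item primes exceeding $n^{1/3}$ can never divide a product of two other primes.
\end{itemize}

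\emph{Main obstacle.} I expect the sharp upper bound to be the hard part. Erd\H{o}s's argument loses constant factors both in the choice of private factors and at the boundary scales near $n^{1/3}$ and $n^{2/3}$. Getting exactly $\tfrac{27}{2}$ requires showing that the hypergraph extracted from $A$ is linear up to $o(\cdot)$ defects, and then optimising an integral over scales $p=n^{\alpha}$ with density $1/(\alpha\log n)$ to show the optimum sits at $\alpha=1/3$. I would first attempt this optimisation as a fractional-matching LP over dyadic scales and check that its value is $\tfrac{27}{2}$.
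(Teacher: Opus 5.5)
\textbf{Upper bound.} Your framework is the right one. An injection of $A$ into a multiplicative basis via private factors is exactly what the paper does, counting factors with multiplicity so that $b=c$ and $a=x^2$ are covered.

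The gap is in the basis you choose. The primes together with $[1,n^{2/3}]$ have size $\pi(n)+n^{2/3}$, which is too large by a factor $(\log n)^2$ in the secondary term. The sharpening you then sketch is where all the content lies, and it is not carried out.

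Its key step is also unjustified as stated. Two elements sharing two large primes need not violate the condition. Take large primes $p,q$ and small primes $s\ne t$ with $pqs,pqt\le n$, where $s,t$ divide no other member; then $pqs$ and $pqt$ can coexist. So linearity holds at best after charging such configurations to their cofactors, which you do not do. Likewise, the integers whose prime factors are all below $n^{1/3}/(\log n)^C$ have positive density, so sparsity does not make them negligible.

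The missing idea is simply a sharper basis, which makes linearity and any optimisation over scales unnecessary. Take $\mathcal B_0=[1,n^{3/5}]$, the primes in $(n^{3/5},n]$, $\mathcal B_2=\{pq:p,q\le n^{1/3}\}$, and $\mathcal B_3=\{qr:n^{1/3}<q\le n^{2/5},\ r\le n/q^2\}$.
\begin{itemize}
\item \emph{Basis property.} Every $m\le n$ is a product of two of these. The only case not reducible to $\mathcal B_0$ and the primes is when all prime factors of $m$ are at most $n^{2/5}$ and at least three of them, $p\ge q\ge r$, exceed $n^{1/5}$. Then $qr\in\mathcal B_2\cup\mathcal B_3$, since $r\le n/(pq)\le n/q^2$, and $m/(qr)<n^{3/5}$.
\item \emph{Size.} With $S=n^{2/3}/(\log n)^2$, one has $|\mathcal B_0|=o(S)$ and $|\mathcal B_2|\sim\frac92S$. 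Also $|\mathcal B_3|=\sum_{n^{1/3}<q\le n^{2/5}}\pi(n/q^2)\sim 9S$ by the prime number theorem and partial summation.
\end{itemize}

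\textbf{Lower bound.} Your heuristic $\frac{27}{2}=3\cdot\frac92$, with each triple yielding three usable products, is wrong. The accounting built on it (removing a bounded number of primes per triple and inserting more products) is not a valid construction. Indeed, the three pair products of a triple cannot even coexist, since $pq\mid(pr)(qr)$.

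The correct split is $\frac{27}{2}=\frac92+9$:
\begin{itemize}
\item Each triple $p<q<r$ of a linear family with $pqr\le n$ contributes the single element $pqr$.
\item All vertex primes are deleted, at a cost of only $o(S)$.
\item By linearity, a triple is determined by its two smallest primes, which satisfy $pq^2<n$.
\item Such pairs number $(\frac92+9+o(1))S$. The $\frac92$ comes from $q\le n^{1/3}$, and the $9$ from $q>n^{1/3}$, $p\le n/q^2$; this mirrors $\mathcal B_2$ and $\mathcal B_3$.
\end{itemize}
So the construction must give almost every such pair its own third prime, including pairs near the boundary $pq^2=n$. A Steiner-type system in dyadic blocks, with a colouring of $H$, does not address this: it ignores the product constraint, and it does not track which pairs get covered.

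The paper instead uses bins $P_i=(n^{1/3}e^{ih},n^{1/3}e^{(i+1)h}]$ with $h\to0$. For each cell $i\le j$ with $i+2j\le-4$, set $k=-i-j-3>j$. It properly edge-colours the complete bipartite graph between $P_i$ and $P_j$ (the complete graph on $P_i$ if $i=j$), with colours injected into $P_k$. This is possible because $|P_k|\ge\max(|P_i|,|P_j|)$, and the colours serve as the third primes.
\begin{itemize}
\item Colour classes are matchings, which gives linearity within a cell.
\item The index sum $i+j+k=-3$ gives linearity across cells and also $pqr\le n$.
\item The exact cell weight $e^{-h}+\frac12e^{-2h}\to\frac32$ yields $9\cdot\frac32=\frac{27}{2}$.
\end{itemize}
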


The upper bound comes from the four classes in Erd\H{o}s's factorisation argument.  Their two nonnegligible secondary terms have constants $9/2$ and $9$.  For the lower bound, we divide the primes near $n^{1/3}$ into logarithmic bins.  Complete bipartite graphs between two lower bins are properly edge-coloured by primes in a third, higher bin.  The resulting prime triples form a linear $3$-uniform hypergraph, and their products can replace all primes used as vertices.  An exact geometric-series calculation gives the matching constant $9(1/2+1)=27/2$.

All logarithms are natural.  We write
\begin{equation}\label{eq:scale}
 y=n^{1/3},\qquad M=\frac{y}{\log n},\qquad
 S=M^2=\frac{n^{2/3}}{(\log n)^2}.
\end{equation}
The prime number theorem and its standard consequences are used throughout.

\section{The upper bound}

We first isolate the combinatorial form of the factorisation argument.  A multiset below always records multiplicity.

\begin{lemma}[Private factor]\label{lem:private}
Let $\mathcal B$ be a set of positive integers such that every $a\in A$ has a chosen factorisation $a=uv$ with $u,v\in\mathcal B$.  If $A$ is strongly $2$-primitive, then $|A|\leq|\mathcal B|$.
\end{lemma}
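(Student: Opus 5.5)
Each $a\in A$ comes with a chosen factorisation $a=u_av_a$, $u_a,v_a\in\mathcal B$. Form the multigraph $G$ on vertex set $\mathcal B$ with one edge $e_a=\{u_a,v_a\}$ for each $a\in A$; when $u_a=v_a$ the edge is a loop. Distinct elements of $A$ give distinct edges, so $G$ has exactly $|A|$ edges. The plan is to show that every connected component of $G$ has at most as many edges as vertices. Summing over components then gives $|A|\le|\mathcal B|$.

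The key step is to find a \emph{private factor} for each $a$. Say that $a$ has private factor $u_a$ if no other $b\in A$, $b\neq a$, uses $u_a$ in its factorisation; define the same for $v_a$. I claim every $a$ has at least one private factor. Suppose not. Then there are $b,c\in A\setminus\{a\}$ with $u_a\mid b$ and $v_a\mid c$. If $b\neq c$, then $a=u_av_a\mid bc$, which contradicts \eqref{eq:strong}. If $b=c$, we still have $a\mid u_av_a\cdot(\text{stuff})$, and I would use $a\mid b\cdot b=b^2$ since $u_a\mid b$ and $v_a\mid b$. This is allowed because \eqref{eq:strong} permits $b=c$, so it is again a contradiction. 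The same argument covers the degenerate case $u_a=v_a$: a second element $b\ne a$ containing $u_a$ would give $a=u_a^2\mid b^2$. So each $a$ owns a factor $w_a\in\{u_a,v_a\}$ that appears in no other element's chosen factorisation.

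The map $a\mapsto w_a$ is injective. If $w_a=w_b$ with $a\neq b$, then $w_a$ appears in the factorisation of $b$, which contradicts privacy. Hence $|A|\le|\mathcal B|$ directly, and the graph formulation is not even needed. The only point needing care is the $b=c$ case, where the non-distinct convention in \eqref{eq:strong} is essential. There I must check that "$u_a$ appears in $b$'s factorisation" genuinely gives $u_a\mid b$, which is immediate since $b=u_bv_b$.
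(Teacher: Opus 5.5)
Your proof is correct and is essentially the paper's argument: each $a$ gets a factor in $\{u_a,v_a\}$ private to it, and $a\mapsto$ that factor is injective. The multigraph set-up you abandon is indeed unnecessary. The one difference is the square case $u_a=v_a=x$. You show that $x$ occurs in no other chosen factorisation at all, using $a=x^2\mid b^2$ and the fact that $b=c$ is allowed. The paper instead uses a weaker, multiplicity-based notion of privacy: it needs only that no other $b$ has chosen factorisation $\{x,x\}$, which holds because that would force $b=x^2=a$. It then gets injectivity from strict multiplicity comparisons.
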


\begin{proof}
For $a\in A$, let $E_a=\{u,v\}$ be its chosen factor multiset, and write $\mu_a(x)$ for the multiplicity of $x$ in $E_a$.  We claim that $E_a$ contains an $x$ such that
\begin{equation}\label{eq:private-factor}
 \mu_a(x)>\mu_{a'}(x)\qquad\text{for every }a'\in A\setminus\{a\}.
\end{equation}
If the two factors in $E_a$ are distinct and the claim fails, choose $b,c\in A\setminus\{a\}$ whose chosen factorisations contain the first and second factors, respectively.  Then $a\mid bc$, contrary to \eqref{eq:strong}; it is immaterial if $b=c$.  If $E_a=\{x,x\}$ and the claim fails, some $b\ne a$ has $E_b=\{x,x\}$, which gives $b=x^2=a$, again impossible.

Choose one $x$ satisfying \eqref{eq:private-factor} for each $a$.  Two different elements cannot choose the same $x$, since their two strict multiplicity inequalities would contradict each other.  This gives an injection $A\hookrightarrow\mathcal B$.
\end{proof}

We now give the basis and keep its leading constant.  Put
\begin{align*}
 \mathcal B_0&=[1,n^{3/5}],\\
 \mathcal B_1&=\{p\in\Pp:n^{3/5}<p\leq n\},\\
 \mathcal B_2&=\{pq:p,q\in\Pp,\ p,q\leq y\},\\
 \mathcal B_3&=\{qr:q,r\in\Pp,\ y<q\leq n^{2/5},\ r\leq n/q^2\},
\end{align*}
and $\mathcal B=\mathcal B_0\cup\mathcal B_1\cup\mathcal B_2\cup\mathcal B_3$.

\begin{lemma}[Multiplicative basis]\label{lem:basis}
Every integer $m\leq n$ is a product of two members of $\mathcal B$.
\end{lemma}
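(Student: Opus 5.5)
The plan is a case analysis on the largest prime factor of $m$. Write $m=p_1p_2\cdots p_k$ with $p_1\ge p_2\ge\dots\ge p_k$, and recall $y=n^{1/3}$ from \eqref{eq:scale}. Since $1\in\mathcal B_0$, every $m\le n^{3/5}$ is $m\cdot 1$, so we may assume $m>n^{3/5}$. In every case the aim is a factorisation $m=uv$ in which one factor is visibly in some $\mathcal B_i$ and the cofactor is at most $n^{3/5}$, hence in $\mathcal B_0$.

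\emph{Large $p_1$.} If $p_1>n^{3/5}$, then $p_1\in\mathcal B_1$ and $m/p_1<n^{2/5}$, so $m/p_1\in\mathcal B_0$. If $n^{2/5}<p_1\le n^{3/5}$, then $p_1\in\mathcal B_0$ and $m/p_1<n^{3/5}$.

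\emph{Small $p_1$.} Suppose $p_1\le y$. If $k\le 2$, then $m$ is $1$ times a member of $\mathcal B_2$. Otherwise $p_1p_2\in\mathcal B_2$, and we are done unless $m/(p_1p_2)>n^{3/5}$. In that case $p_1p_2<n^{2/5}$, so $p_2<n^{1/5}$ and every prime factor of $m/p_1$ is below $n^{1/5}$. We then use a greedy splitting step, which I will reuse below. Multiply $p_1,p_2,\dots$ in order until the partial product $d$ first exceeds $n^{2/5}$. Each factor added is at most $\max(p_1,n^{1/5})\le y$, so $d\le n^{2/5}y<n^{3/5}$ unless the overshoot comes from $p_1$ itself, which is impossible because $p_1\le y<n^{2/5}$. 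Hence $d\le n^{2/5}\cdot n^{1/5}=n^{3/5}$ when the last factor added is small. The cofactor satisfies $m/d<n/n^{2/5}=n^{3/5}$, so both factors lie in $\mathcal B_0$. I need to recheck the bound on $d$ when $p_1$ is the only large factor; in that case I would start the greedy product from $p_1$.

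\emph{Middle range $y<p_1\le n^{2/5}$.} This is where $\mathcal B_3$ is needed and is the main obstacle. Put $t=m/p_1\le n/p_1<n^{2/3}$. If $t\le n^{3/5}$, then $m=p_1\cdot t$ with both factors in $\mathcal B_0$. Otherwise we look for a prime $r\mid t$ with
\[
t/n^{3/5}\le r\le n/p_1^2.
\]
Such an $r$ gives $m=(p_1r)\cdot(t/r)$ with $p_1r\in\mathcal B_3$ and $t/r\le n^{3/5}$. The left endpoint is at most $n^{2/5}/p_1<n^{1/15}$, and the right endpoint is at least $n^{1/5}$. Try $r=p_2$ first.
\begin{itemize}
\item If $p_2<t/n^{3/5}<n^{1/15}$, then all primes of $t$ are tiny. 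The greedy splitting above, started from $p_1$, gives $d\le n^{2/5+1/15}<n^{3/5}$ and cofactor below $n^{3/5}$.
\item If $p_2>n/p_1^2$, then $p_2>n^{1/5}$ and $m/(p_1p_2)<p_1$. This is the delicate subcase. If $p_2>y$ as well, I would swap roles and use $q=p_2$ in $\mathcal B_3$ with $r=p_1$ or $r=p_3$, checking $r\le n/p_2^2$. If $p_2\le y$, I would try $p_2p_3\in\mathcal B_2$ or $p_1p_3\in\mathcal B_3$ and bound the cofactor using $p_1^2p_2\le m\le n$.
\end{itemize}
I expect this last subcase to take the most care. The bookkeeping of which pair goes into $\mathcal B_2$ or $\mathcal B_3$ must ensure that the cofactor always drops below $n^{3/5}$.
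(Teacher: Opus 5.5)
Your argument is sound up to the final subcase. The one slip before that is the claim $d\le n^{2/5}y<n^{3/5}$, which is false since $n^{2/5}y=n^{11/15}$. The bound $d<n^{2/5}\cdot n^{1/5}$ you give next is the correct one: $p_1<n^{2/5}$, so the greedy product never stops at $p_1$.

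The genuine gap is the subcase you flag yourself: $y<p_1\le n^{2/5}$, $t=m/p_1>n^{3/5}$, $p_2>n/p_1^2$. The plan you sketch for it does not work.
\begin{itemize}
\item Pairing $q=p_2>y$ with $r=p_1$ is never admissible, because $n/p_2^2<n^{1/3}=y<p_1$.
\item The inequality $p_1^2p_2\le m$ is false here. Your subcase hypothesis says exactly $p_1^2p_2>n\ge m$; the usable inequality is $p_1p_2p_3\le n$.
\item Most importantly, when $p_3$ is small, no two-prime factor from $\mathcal B_2\cup\mathcal B_3$ can work.
\end{itemize}
For an example, take $y<p_2<p_1<n^{0.35}$ and $m=p_1p_2 2^a$, with $2^a$ the largest power of $2$ keeping $m\le n$, so $m>n/2$. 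The only members of $\mathcal B_1\cup\mathcal B_2\cup\mathcal B_3$ dividing $m$ are $4$, $2p_1$ and $2p_2$. Note that $p_1p_2\notin\mathcal B_3$, since $n/p_i^2<y$. The cofactors of these three numbers exceed $n/(4n^{0.35})>n^{3/5}$ and have at least three prime factors, so they lie outside $\mathcal B$. Hence $m$ must be split into two members of $\mathcal B_0$, with $p_1$ and $p_2$ in different factors, since $p_1p_2>n^{2/3}$. Nothing in your proposal produces such a split.

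The missing idea is the paper's dichotomy on the number of prime factors exceeding $n^{1/5}$. In your subcase $p_1\ge p_2>n^{1/5}$, and $p_3$ exists because $t>n^{3/5}>p_2$.
\begin{itemize}
\item If $p_3\le n^{1/5}$, omit $p_2$: multiply $p_1$ by $p_3,p_4,\dots$ until the product first exceeds $n^{2/5}$. This happens because $m/p_2\ge t>n^{3/5}$. The resulting divisor $d$ is at most $n^{2/5}\cdot n^{1/5}=n^{3/5}$, and $m/d<n^{3/5}$, so both factors lie in $\mathcal B_0$.
\item If $p_3>n^{1/5}$, take $p_2p_3$. It lies in $\mathcal B_2$ when $p_2\le y$, and in $\mathcal B_3$ when $p_2>y$, because $p_3\le n/(p_1p_2)\le n/p_2^2$. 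Since $p_2p_3>n^{2/5}$, its cofactor is below $n^{3/5}$.
\end{itemize}
With this patch your case analysis becomes a complete proof. The paper runs the same two steps after first disposing of $m\le n^{9/10}$ and of prime factors above $n^{2/5}$.
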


\begin{proof}
We use first the following elementary observation: every $m\leq X$ can be written $m=uv$, where $v\leq X^{2/3}$ and $u$ is either prime or at most $X^{2/3}$.  Indeed, this is immediate if $m\leq X^{2/3}$.  Otherwise, if $m$ has a prime divisor $p>X^{1/3}$, take $u=p$ and $v=m/p<X^{2/3}$.  If all prime factors are at most $X^{1/3}$, multiply them until their product first exceeds $X^{1/3}$; this product and its complementary divisor are both at most $X^{2/3}$.

If $m\leq n^{9/10}$, apply the observation with $X=n^{9/10}$.  Both factors belong to $\mathcal B_0\cup\mathcal B_1$.  We may therefore suppose that $m>n^{9/10}$.  If $m$ has a prime factor $p>n^{2/5}$, then $p\in\mathcal B_0\cup\mathcal B_1$ and $m/p<n^{3/5}$, so we are done.  Henceforth all prime factors of $m$ are at most $n^{2/5}$.

Count with multiplicity the prime factors of $m$ that exceed $n^{1/5}$.  If there are at most two, one can find a divisor $u\in[n^{2/5},n^{3/5}]$ as follows.  With zero or one such factor, start with their product and multiply factors at most $n^{1/5}$ until first reaching $n^{2/5}$.  With exactly two, say $p\geq q>n^{1/5}$, start with $p$, omit $q$, and again multiply small factors until reaching $n^{2/5}$; the crossing occurs because $m/q>n^{1/2}$.  In either case the last multiplier is at most $n^{1/5}$, so $u\leq n^{3/5}$, while $m/u\leq n^{3/5}$.  Thus both factors lie in $\mathcal B_0$.

It remains to consider at least three prime factors $p\geq q\geq r>n^{1/5}$.  If $q\leq y$, then $qr\in\mathcal B_2$ and $m/(qr)<n^{3/5}$.  If $q>y$, then
\[
 r\leq\frac{n}{pq}\leq\frac{n}{q^2},
\]
so $qr\in\mathcal B_3$, while $m/(qr)<n^{7/15}<n^{3/5}$.  This completes the proof.
\end{proof}

The secondary contribution of $\mathcal B_3$ is the only count that is not immediate.

\begin{lemma}\label{lem:B3}
As $n\to\infty$,
\[
 \sum_{\substack{y<q\leq n^{2/5}\\q\in\Pp}}
 \pi\left(\frac{n}{q^2}\right)=(9+o(1))S.
\]
\end{lemma}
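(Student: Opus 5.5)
We sketch the computation. Write $q=y^{t}$ with $1<t\le 6/5$, since $n^{2/5}=y^{6/5}$. Then $n/q^2=y^{3-2t}$, and $3-2t$ runs over $[3/5,1)$, so the argument of $\pi$ is always at least $n^{1/5}$ and at most about $y$. The prime number theorem gives
\[
\pi(n/q^2)=(1+o(1))\frac{n}{q^2\log(n/q^2)}
\]
uniformly in this range, because $n/q^2\geq n^{1/5}\to\infty$. The sum is therefore $(1+o(1))\sum_q n/(q^2\log(n/q^2))$.

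The bulk of the sum comes from $q$ just above $y$. The weight $1/q^2$ makes the sum over $q>y$ dominated by $q\le Ky$ for a large constant $K$. Near $q=y$ we have $\log(n/q^2)=\log(y)+O(\log K)=(1+o(1))\tfrac13\log n$. For the tail $q>Ky$ we use $\log(n/q^2)\ge \tfrac15\log n$ together with $\sum_{q>Ky}1/q^2\ll 1/(Ky\log y)$. This tail contributes at most $O(S/K)$, which is negligible once we let $K\to\infty$ slowly.

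The main range $y<q\le Ky$ is handled by partial summation. The standard consequence of the prime number theorem is
\[
\sum_{q>y,\ q\in\Pp}\frac1{q^2}=(1+o(1))\frac{1}{y\log y}.
\]
Substituting,
\[
\sum_{y<q\le n^{2/5}}\pi(n/q^2)=(1+o(1))\frac{n}{\tfrac13\log n}\cdot\frac{1}{y\cdot\tfrac13\log n}=(9+o(1))\frac{n}{y(\log n)^2}.
\]
Since $n/y=n^{2/3}$, the right-hand side is $(9+o(1))S$, as claimed.

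The only delicate point is justifying uniformity. We need the $1+o(1)$ in the prime number theorem to hold uniformly over the whole range of $q$, which it does since $n/q^2\ge n^{1/5}$. We also need the factor $\log(n/q^2)$ to be replaced by $\tfrac13\log n$ only on the dominant range $q\le Ky$, with the crude lower bound $\tfrac15\log n$ used on the tail.

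Alternatively, one can avoid the $K$-splitting altogether. Write the sum as a Stieltjes integral $\int_y^{n^{2/5}} \frac{n}{u^2\log(n/u^2)}\,\frac{du}{\log u}(1+o(1))$ and substitute $u=y^{t}$. This gives
\[
\frac{9n}{y(\log n)^2}\int_1^{6/5}\frac{y^{1-t}\log y}{t(3-2t)}\,dt.
\]
Here $y^{1-t}\log y\,dt$ concentrates at $t=1$ with total mass $1+o(1)$, and the remaining factor $1/(t(3-2t))$ equals $1$ at $t=1$. This again yields $9S$.
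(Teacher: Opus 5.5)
Your proof is correct and follows the paper's argument. You split the range at $Ky$, apply the prime number theorem uniformly with $\log(n/q^2)\sim\tfrac13\log n$ and partial summation for $\sum_{q>y}q^{-2}\sim 1/(y\log y)$ on the main range, and bound the tail by $O(S/K)$ using $\log(n/q^2)\ge\tfrac15\log n$; the only differences are cosmetic (you let $K\to\infty$ slowly with $n$ where the paper fixes $A$, lets $n\to\infty$, then lets $A\to\infty$), and your integral variant is a correct repackaging of the same computation.
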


\begin{proof}
Fix $A>1$; for all sufficiently large $n$ we have $Ay<n^{2/5}$.  Uniformly for $y<q\leq Ay$, the prime number theorem gives
\[
 \pi\left(\frac{n}{q^2}\right)
 =(1+o(1))\frac{3n}{q^2\log n}.
\]
Partial summation and the prime number theorem also give
\[
 \sum_{y<q\leq Ay}\frac1{q^2}
 =(1+o(1))\frac{3}{y\log n}\left(1-\frac1A\right).
\]
Consequently the part with $q\leq Ay$ is
\begin{equation}\label{eq:B3-main}
 \left(9\left(1-\frac1A\right)+o(1)\right)S.
\end{equation}
For the tail, the standard estimates $\pi(t)\ll t/\log t$ and
$\sum_{q>z}q^{-2}\ll(z\log z)^{-1}$ imply
\[
 \sum_{Ay<q\leq n^{2/5}}\pi\left(\frac{n}{q^2}\right)
 \ll\frac{n}{\log n}\sum_{q>Ay}\frac1{q^2}
 \ll\frac{S}{A}.
\]
Letting first $n\to\infty$ and then $A\to\infty$ in \eqref{eq:B3-main} proves the result.
\end{proof}

\begin{proposition}\label{prop:upper}
Every strongly $2$-primitive $A\subseteq[1,n]$ satisfies
\[
 |A|\leq\pi(n)+\left(\frac{27}{2}+o(1)\right)S.
\]
\end{proposition}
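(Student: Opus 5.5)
By Lemma~\ref{lem:basis}, every $a\in A$ has a factorisation $a=uv$ with $u,v\in\mathcal B$. Fixing one such factorisation for each $a$, Lemma~\ref{lem:private} gives $|A|\le|\mathcal B|$. So it suffices to show
\[
|\mathcal B|\le\pi(n)+\left(\tfrac{27}{2}+o(1)\right)S,
\]
and I will bound the four pieces separately using $|\mathcal B|\le|\mathcal B_0|+|\mathcal B_1|+|\mathcal B_2|+|\mathcal B_3|$.

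First, $\mathcal B_0$ and $\mathcal B_1$. We have $|\mathcal B_0|=\lfloor n^{3/5}\rfloor$. This is $o(S)$, because $n^{3/5}(\log n)^2/n^{2/3}=n^{-1/15}(\log n)^2\to0$. Also $|\mathcal B_1|=\pi(n)-\pi(n^{3/5})\le\pi(n)$.

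Next, $\mathcal B_2$. Its elements are products $pq$ with $p,q\le y$, counted as unordered pairs that may coincide. Hence
\[
|\mathcal B_2|\le\binom{\pi(y)}{2}+\pi(y)=\tfrac12\pi(y)^2+O(\pi(y)).
\]
Since $\log y=\tfrac13\log n$, the prime number theorem gives $\pi(y)=(1+o(1))\,3y/\log n=(3+o(1))M$. So $|\mathcal B_2|=(\tfrac92+o(1))S$, with the linear term $O(M)=o(S)$.

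Finally, $\mathcal B_3$. Every element has the form $qr$ with $q$ prime in $(y,n^{2/5}]$ and $r$ prime with $r\le n/q^2$. Counting pairs $(q,r)$ therefore gives an upper bound, even though different pairs may yield the same number. This bound is
\[
|\mathcal B_3|\le\sum_{y<q\le n^{2/5}}\pi\left(\frac{n}{q^2}\right)=(9+o(1))S
\]
by Lemma~\ref{lem:B3}. Overlaps between the classes, and any $1$ in $\mathcal B$, only help, since I am bounding $|\mathcal B|$ by the sum of the class sizes.

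Adding the four bounds gives $\pi(n)+(\tfrac92+9+o(1))S=\pi(n)+(\tfrac{27}{2}+o(1))S$. The only genuine analytic input is Lemma~\ref{lem:B3}, which is already proved. The remaining step that needs care is the asymptotic for $\pi(y)^2$: one must use the correct scaling $\log y=\tfrac13\log n$ to get the constant $9/2$. I expect no real obstacle beyond that.
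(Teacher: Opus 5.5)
Your proposal is correct and follows the paper's proof essentially verbatim: Lemmas~\ref{lem:basis} and \ref{lem:private} reduce the problem to bounding $|\mathcal B|$. The classes then contribute $\pi(n)+o(S)$ from $\mathcal B_0\cup\mathcal B_1$, $(\tfrac92+o(1))S$ from $\mathcal B_2$ and $(9+o(1))S$ from $\mathcal B_3$, exactly as you compute. The only cosmetic difference is that the paper notes $n/q^2<y<q$, so each element of $\mathcal B_3$ has a unique factor $q>y$ and the pair count is exact, whereas you rightly observe that using the pair count as an upper bound already suffices.
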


\begin{proof}
Unique factorisation shows that
\[
 |\mathcal B_2|=\binom{\pi(y)+1}{2}
 =\left(\frac92+o(1)\right)S.
\]
For $q>y$ we have $n/q^2<y<q$, so every member $qr$ of $\mathcal B_3$ has a unique factor $q>y$.  Lemma~\ref{lem:B3} therefore gives
$|\mathcal B_3|=(9+o(1))S$.  Finally,
\[
 |\mathcal B_0|+|\mathcal B_1|
 =\pi(n)+\lfloor n^{3/5}\rfloor-\pi(n^{3/5})
 =\pi(n)+o(S).
\]
Thus $|\mathcal B|\leq\pi(n)+(27/2+o(1))S$.  Lemmas~\ref{lem:basis} and \ref{lem:private} complete the proof.
\end{proof}

\section{The lower bound}

A family $\mathcal H$ of $3$-element sets is \emph{linear} if two distinct members meet in at most one element.  The next observation records exactly why linear prime triples give the strong, rather than merely ordinary, form of $2$-primitivity.

\begin{lemma}[Linear triples]\label{lem:linear}
Let $\mathcal H$ be a linear family of triples of distinct primes, each with product at most $n$.  Suppose that all its vertex primes are at most $n$, and set
\[
 A_{\mathcal H}
 =\{p\leq n:p\in\Pp,\ p\notin V(\mathcal H)\}
 \ \cup\ 
 \left\{\prod_{p\in E}p:E\in\mathcal H\right\}.
\]
Then $A_{\mathcal H}$ is strongly $2$-primitive and
\[
 |A_{\mathcal H}|=\pi(n)-|V(\mathcal H)|+|\mathcal H|.
\]
\end{lemma}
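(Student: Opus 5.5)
We verify the count first and then the divisibility condition \eqref{eq:strong} by a case analysis on the types of $a,b,c$.

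\emph{The count.} By unique factorisation, distinct triples have distinct products. A product of three primes is never a prime. So the two parts of $A_{\mathcal H}$ are disjoint, and the map $E\mapsto\prod_{p\in E}p$ is injective. The prime part has $\pi(n)-|V(\mathcal H)|$ elements, because every vertex prime is at most $n$. This gives the formula for $|A_{\mathcal H}|$.

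\emph{Strong $2$-primitivity.} Take $a,b,c\in A_{\mathcal H}$ with $a\ne b$ and $a\ne c$, and suppose for contradiction that $a\mid bc$. Note that $bc$ is squarefree or close to it; we only need its prime support.

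\emph{Case 1: $a=p$ is a non-vertex prime.} Then $p\mid b$ or $p\mid c$. Say $p\mid b$. If $b$ is a prime, then $b=p=a$, which is excluded. If $b$ is a triple product, then $p\in V(\mathcal H)$, a contradiction.

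\emph{Case 2: $a=p_1p_2p_3$ for some $E=\{p_1,p_2,p_3\}\in\mathcal H$.} Each $p_i$ divides $b$ or $c$.
\begin{itemize}
\item A prime element of $A_{\mathcal H}$ is a non-vertex, so it shares no prime with $a$.
\item A triple product coming from $E'\ne E$ shares at most one prime with $a$, by linearity.
\item The triple product coming from $E$ itself equals $a$, and both $b$ and $c$ differ from $a$.
\end{itemize}
Hence $b$ and $c$ together cover at most two of the three primes of $a$. This also holds when $b=c$, since $a$ is squarefree and so multiplicity does not help. This contradicts $a\mid bc$.

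The only step that genuinely uses the hypotheses is Case 2. Linearity guarantees each of $b$ and $c$ contributes at most one prime. The squarefreeness of $a$ (the primes in a triple are distinct) prevents $b=c$ from supplying a repeated prime. So this is where I would be careful. The rest is bookkeeping.
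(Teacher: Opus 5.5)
Your proof is correct and follows essentially the same route as the paper: distinctness by unique factorisation gives the count; a retained prime divides no triple product; and for a triple target, retained primes share no prime with it while each other triple shares at most one (by linearity), so $b$ and $c$ together cover at most two of its three distinct primes, including when $b=c$. Your aside that $bc$ is ``squarefree or close to it'' is not accurate in general (e.g.\ when $b=c$), but as you say only the prime support of $bc$ matters, so it does no harm.
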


\begin{proof}
Unique factorisation makes the displayed elements distinct.  A retained prime is not a factor of any triple product.  Now let the target be the product associated with $E\in\mathcal H$.  Every other triple meets $E$ in at most one prime, so two distinct other triple products contain at most two of the three prime factors of the target.  If the two other elements coincide, squaring one triple product still supplies at most one of the three distinct prime factors of the target.  Retained primes supply none of them.  Thus the target cannot divide a product of two other members.
\end{proof}

We construct the linear family.  Fix $h>0$, and for $i\in\mathbb Z$ put
\[
 \Delta_i=e^{(i+1)h}-e^{ih}.
\]
Consider all index cells
\begin{equation}\label{eq:cells}
 (i,j)\in\mathbb Z^2,\qquad i\leq j,\qquad i+2j\leq-4,
\end{equation}
and attach to a cell the third index
\begin{equation}\label{eq:k}
 k=-i-j-3.
\end{equation}
The buffer in \eqref{eq:cells} gives
\begin{equation}\label{eq:order}
 k-j=-i-2j-3\geq1,
\end{equation}
so $i\leq j<k$.

The total cell weight can be evaluated exactly.

\begin{lemma}[Cell weight]\label{lem:weight}
For every $h>0$,
\begin{equation}\label{eq:weight}
 \sum_{\substack{i<j\\i+2j\leq-4}}\Delta_i\Delta_j
 +\frac12\sum_{i\leq-2}\Delta_i^2
 =e^{-h}+\frac12e^{-2h}.
\end{equation}
\end{lemma}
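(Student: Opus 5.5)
The plan is to evaluate the left-hand side of \eqref{eq:weight} exactly, using two telescoping identities for the $\Delta_i$. Throughout, write $t=e^{h}>1$, so that $\Delta_i=t^{i}(t-1)$, and record the basic telescoping fact
\[
 \sum_{i\leq m}\Delta_i=\lim_{N\to\infty}\bigl(e^{(m+1)h}-e^{-Nh}\bigr)=e^{(m+1)h}=t^{m+1},
\]
which holds because $e^{-Nh}\to0$. All series below have positive terms and converge geometrically, so rearranging them is harmless.

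For the off-diagonal sum I will fix $j$ and sum over $i$. The index $i$ must satisfy both $i\leq j-1$ and $i\leq-4-2j$, and which constraint binds depends on $j$. Since $-4-2j<j$ exactly when $j>-4/3$, there are two cases.
\begin{itemize}
\item For $j\geq-1$ the binding constraint is $i\leq-4-2j$, and the inner sum is $t^{-3-2j}$.
\item For $j\leq-2$ the binding constraint is $i\leq j-1$, and the inner sum is $t^{j}$.
\end{itemize}
The diagonal condition $3i\leq-4$ is equivalent to $i\leq-2$, which matches the range of the second sum in \eqref{eq:weight}.

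The left-hand side of \eqref{eq:weight} therefore splits as
\[
 \sum_{j\geq-1}\Delta_j\,t^{-3-2j}
 \;+\;\sum_{j\leq-2}\Bigl(\Delta_j\,t^{j}+\tfrac12\Delta_j^2\Bigr).
\]
The first piece is a geometric series:
\[
 (t-1)\,t^{-3}\sum_{j\geq-1}t^{-j}=(t-1)\,t^{-3}\cdot\frac{t^{2}}{t-1}=t^{-1}=e^{-h}.
\]
For the second piece, I complete the square termwise:
\[
 \Delta_j t^j+\tfrac12\Delta_j^2=\tfrac12\bigl[(t^j+\Delta_j)^2-t^{2j}\bigr]=\tfrac12\bigl(t^{2(j+1)}-t^{2j}\bigr),
\]
using $t^j+\Delta_j=t^{j+1}$. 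Summing over $j\leq-2$ telescopes to $\tfrac12 t^{-2}=\tfrac12e^{-2h}$. Adding the two pieces gives \eqref{eq:weight}.

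The only delicate step is getting the case split in $j$ right, namely the boundary at $j=-1$ versus $j=-2$ and its consistency with the diagonal range $i\leq-2$. Everything after that is routine geometric and telescoping sums.
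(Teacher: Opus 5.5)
Your proof is correct and follows the paper's route: fix $j$, evaluate the inner sum via $\sum_{i\le m}\Delta_i=t^{m+1}$, and split according to which constraint on $i$ binds. The only difference is in the bookkeeping: you put the boundary term $j=-1$ (where both constraints give $i\le-2$) in the upper range and pair the $j\le-2$ off-diagonal terms with the diagonal terms, so that range telescopes directly to $\tfrac12t^{-2}$, whereas the paper evaluates $\frac1{t+1}$, $t^{-2}$ and $\frac{t-1}{2t^2(t+1)}$ separately and then simplifies their sum.
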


\begin{proof}
Put $t=e^h$.  Since $\sum_{i\leq m}\Delta_i=t^{m+1}$, the off-diagonal sum equals
\begin{align*}
 \sum_{j\leq-1}\Delta_j\sum_{i\leq j-1}\Delta_i
 +\sum_{j\geq0}\Delta_j\sum_{i\leq-2j-4}\Delta_i
 &=\frac1{t+1}+t^{-2}.
\end{align*}
The diagonal contribution is
\[
 \frac12\sum_{i\leq-2}\Delta_i^2
 =\frac{t-1}{2t^2(t+1)}.
\]
Adding and simplifying gives $t^{-1}+\tfrac12t^{-2}$.
\end{proof}

Fix for the moment a finite set $\mathcal C$ of cells satisfying \eqref{eq:cells}.  For every index $r$ occurring in these cells or as a third index, define the prime bin
\[
 P_r=P_r(n)=\{p\in\Pp:ye^{rh}<p\leq ye^{(r+1)h}\},
 \qquad m_r=|P_r|.
\]
For this fixed finite collection, the prime number theorem gives, uniformly in $r$,
\begin{equation}\label{eq:bin-PNT}
 m_r=(3+o(1))M\Delta_r.
\end{equation}
If $(i,j)\in\mathcal C$ and $k$ is given by \eqref{eq:k}, then
$\Delta_k\geq e^h\Delta_j\geq e^h\Delta_i$.  Hence \eqref{eq:bin-PNT} implies, for all sufficiently large $n$,
\begin{equation}\label{eq:capacity}
 m_k\geq\max(m_i,m_j).
\end{equation}

For $i<j$, properly edge-colour the complete bipartite graph between $P_i$ and $P_j$ with $\max(m_i,m_j)$ colours, and inject those colours into $P_k$.  For $i=j$, properly edge-colour the complete graph on $P_i$ with at most $m_i$ colours and inject them into $P_k$.  These elementary colourings may be seen directly: if $m=\max(m_i,m_j)$, label the larger side of the bipartite graph bijectively by $\mathbb Z/m\mathbb Z$, label the smaller side injectively by the same group, and colour an edge by the difference of its endpoint labels.  For a complete graph on an odd number $m$ of vertices, label vertices and colours by $\mathbb Z/m\mathbb Z$ and colour $\{a,b\}$ by $(a+b)/2$.  For even $m$, label the vertices by $\{\infty\}\cup\mathbb Z/(m-1)\mathbb Z$, colour $\{\infty,a\}$ by $a$, and colour $\{a,b\}$ by $(a+b)/2$ in $\mathbb Z/(m-1)\mathbb Z$.  Thus every colour class is a matching.

For each coloured edge $\{p,q\}$, with colour prime $r\in P_k$, put the triple $\{p,q,r\}$ into $\mathcal H_n$.  Its primes are distinct by \eqref{eq:order}, and
\begin{equation}\label{eq:product}
 pqr\leq y^3e^{(i+j+k+3)h}=n.
\end{equation}

\begin{lemma}\label{lem:packing}
The family $\mathcal H_n$ is linear.  Moreover,
\begin{equation}\label{eq:H-count}
 \frac{|\mathcal H_n|}{M^2}\longrightarrow
 9\sum_{\substack{(i,j)\in\mathcal C\\i<j}}\Delta_i\Delta_j
 +\frac92\sum_{\substack{(i,i)\in\mathcal C}}\Delta_i^2.
\end{equation}
\end{lemma}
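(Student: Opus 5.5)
Two things need proving: linearity of $\mathcal H_n$, and the count \eqref{eq:H-count}. I will do linearity first. The key observation is that the cells in $\mathcal C$ have disjoint-index structure: from the bin indices of a triple one can recover its cell. Each triple $\{p,q,r\}$ coming from cell $(i,j)$ has $p\in P_i$, $q\in P_j$, $r\in P_k$ with $i\le j<k$ by \eqref{eq:order}. The bins $P_r$ for distinct $r$ are disjoint intervals of primes, so the multiset of bin indices of a triple is determined by its primes.

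Now suppose two distinct triples $E\ne E'$ share two primes. I distinguish which roles the shared primes play.

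First, suppose both shared primes are "edge" primes, that is, $E=\{p,q,r\}$ and $E'=\{p,q,r'\}$. The cell is pinned down by the bins of $p,q$, because given $P_i\ni p$ and $P_j\ni q$ the cell is $(i,j)$, or its reordering, which is unique since $i\le j$. The edge $\{p,q\}$ of that cell's graph receives exactly one colour, so $r=r'$, a contradiction.

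Otherwise the shared primes include a colour prime of at least one triple. Say $r$ is the colour of $E$ with $r\in P_k$, and $r$ lies in $E'$. The difficulty is that $r$ might serve as a vertex in $E'$, coming from a different cell whose low index equals $k$. I handle this by using the largest index. In any triple the colour's bin index $k$ is strictly larger than both vertex indices. Let the two shared primes lie in bins $a\le b$. I split into two subcases.

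If the shared primes are the colour plus one vertex in $E$, say $\{q,r\}$ with $q\in P_j$, there are three possibilities for their roles in $E'$. If they are also colour and vertex in $E'$, then $E'$ has colour index $k'=k$ and a vertex index $j$. The cell of $E'$ is $(i',j')$ with $i'+j'=-k-3=i+j$ and one of $i',j'$ equal to $j$, so the cell is $(i,j)$. Within one cell the colour class of $r$ is a matching, so the edge containing $q$ is unique and $E=E'$. If instead both are vertices in $E'$, then $E'$'s colour index exceeds $k$. This might be consistent, and it is the real obstacle. I expect to resolve it by noting that the paper says colours are injected into $P_k$ per cell, and I would have to argue that a prime used as a colour for cell $(i,j)$ is never a vertex of another cell. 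Otherwise I would assume, as the construction implicitly requires, that $\mathcal C$ is chosen so that the index sets of vertices and colours are disjoint, or that each colour prime's cell is recovered from $k$. This step is where I am least sure the argument closes. The final possibility is that the two shared primes are colours in both triples, which is impossible since each triple has only one colour prime.

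Next, the count. For $i<j$ the cell contributes $m_im_j$ triples, and for $i=j$ it contributes $\binom{m_i}{2}$. By \eqref{eq:bin-PNT}, $m_im_j/M^2\to9\Delta_i\Delta_j$ and $\binom{m_i}{2}/M^2\to\tfrac92\Delta_i^2$. Distinct edges yield distinct triples, and since $\mathcal C$ is finite I can sum the limits, which gives \eqref{eq:H-count}.
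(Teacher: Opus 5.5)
Your count of $|\mathcal H_n|$ matches the paper's. The linearity argument, however, has a genuine gap at exactly the step you flagged: the two shared primes are the colour $r\in P_k$ and a vertex $q\in P_j$ of $E$, but both are vertices of $E'$. Neither of your proposed repairs is available:
\begin{itemize}
\item Colour primes do occur as vertices of other cells. The cells $(-2,-1)$ and $(-4,0)$ both satisfy \eqref{eq:cells}. If both lie in $\mathcal C$, the first injects its colours into $P_0$, and the second uses every prime of $P_0$ as a vertex.
\item A colour bin does not determine its cell: $P_1$ is the colour bin of $(-2,-2)$, $(-3,-1)$ and $(-4,0)$.
\item You cannot add a hypothesis on $\mathcal C$, because the lemma is stated for every finite set of cells satisfying \eqref{eq:cells}.
\end{itemize}

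The missing idea is the invariant built into \eqref{eq:k}: the bin indices of every triple in $\mathcal H_n$ sum to $-3$. Two triples sharing two primes share two bin indices (with multiplicity), so the third index is forced and their index multisets coincide. By \eqref{eq:order} the colour index is the strict maximum of this multiset. Hence the sorted multiset $(i,j,k)$ determines both the cell and the role of each prime. Both triples therefore come from the same cell with the same roles, and your within-cell argument (each edge has one colour, each colour class is a matching) finishes.

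In your problematic subcase this says concretely that the colour of $E'$ would lie in bin $-3-j-k=i<k$, contradicting that its index exceeds $k$. The same observation also disposes of the role-swapped subcase, which you did not list. It is also what justifies your assertion that distinct coloured edges, possibly from different cells, give distinct triples, which the count needs.
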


\begin{proof}
Within one cell, the original graph has no repeated edge and each colour class is a matching.  Hence no pair of primes occurs in two triples from that cell.

For different cells, note that every triple has a sorted index multiset $(i,j,k)$ with $i\leq j<k$ and $i+j+k=-3$.  Two distinct cells cannot share two bin indices: those two indices determine the third by their sum, and sorting then determines the cell.  If the two shared primes were in the same bin, both cells would have to be diagonal, and the sum again determines the third index.  Thus triples from different cells cannot share two primes.  This proves linearity.

An off-diagonal cell supplies $m_im_j$ triples, and a diagonal cell supplies $\binom{m_i}{2}$.  Formula \eqref{eq:H-count} now follows from \eqref{eq:bin-PNT}.
\end{proof}

\begin{proposition}\label{prop:lower}
There are strongly $2$-primitive sets $A\subseteq[1,n]$ such that
\[
 |A|\geq\pi(n)+\left(\frac{27}{2}-o(1)\right)S.
\]
\end{proposition}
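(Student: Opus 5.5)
We combine Lemma~\ref{lem:linear} with Lemma~\ref{lem:packing}, choosing the finite cell set $\mathcal C$ and then the bin width $h$ so that the net gain $|\mathcal H_n|-|V(\mathcal H_n)|$ comes arbitrarily close to $\frac{27}{2}S$. The family $\mathcal H_n$ consists of triples of distinct primes with product at most $n$ by \eqref{eq:product}. All vertex primes are at most $y e^{(k+1)h}$ for bounded $k$, hence at most $n$. So Lemma~\ref{lem:linear} applies and gives a strongly $2$-primitive $A\subseteq[1,n]$ with
\[
 |A|=\pi(n)-|V(\mathcal H_n)|+|\mathcal H_n|.
\]
The vertex set lies in the union of the bins $P_r$ for the finitely many indices $r$ occurring in $\mathcal C$ or as third indices. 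By \eqref{eq:bin-PNT}, its size is $O_{\mathcal C,h}(M)=o(S)$, since $M=o(M^2)$. Hence $|A|\geq\pi(n)+|\mathcal H_n|-o(S)$ for fixed $\mathcal C$ and $h$.

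Next, by Lemma~\ref{lem:packing} and $S=M^2$,
\[
 \liminf_{n\to\infty}\frac{|A|-\pi(n)}{S}\geq 9\Bigl(\sum_{\substack{(i,j)\in\mathcal C\\ i<j}}\Delta_i\Delta_j+\frac12\sum_{(i,i)\in\mathcal C}\Delta_i^2\Bigr).
\]
The diagonal cells $(i,i)$ satisfying \eqref{eq:cells} are exactly those with $3i\leq-4$, i.e.\ $i\leq-2$. So as $\mathcal C$ increases to the set of all cells in \eqref{eq:cells}, the bracket converges to the left side of \eqref{eq:weight}. The series converge absolutely, since $\Delta_i\asymp e^{ih}$ decays geometrically as $i\to-\infty$, and the cells with $j\geq 0$ need $i\leq -2j-4$. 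By Lemma~\ref{lem:weight} this limit is $e^{-h}+\frac12e^{-2h}$.

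Now choose parameters. Given $\varepsilon>0$, first pick $h$ small so that $9(e^{-h}+\frac12 e^{-2h})>\frac{27}{2}-\varepsilon$. Then pick a finite $\mathcal C$ whose bracket exceeds $(\frac{27}{2}-2\varepsilon)/9$. For this fixed pair, all large $n$ give $|A|\geq\pi(n)+(\frac{27}{2}-3\varepsilon)S$. A standard diagonal choice of $\varepsilon=\varepsilon(n)\to0$ then yields the $o(1)$ form.

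The main obstacle is the order of limits. The error terms in \eqref{eq:bin-PNT}, the capacity inequality \eqref{eq:capacity}, and the bound $|V(\mathcal H_n)|=o(S)$ are all uniform only for fixed $\mathcal C$ and $h$. So one must fix these first, let $n\to\infty$, and only afterwards send $\varepsilon\to0$. Convergence of the cell sums is routine by geometric decay.
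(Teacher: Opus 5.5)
Your proposal is correct and matches the paper's proof essentially step for step. You fix $h$ small so that $9(e^{-h}+\frac12e^{-2h})$ is close to $\frac{27}{2}$, then take a finite $\mathcal C$ capturing most of the weight, apply Lemma~\ref{lem:packing} and Lemma~\ref{lem:linear} with $|V(\mathcal H_n)|=O_{h,\mathcal C}(M)=o(S)$, and let $\varepsilon\to0$.

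The only slip is your opening sentence, which says $\mathcal C$ is chosen before $h$. That order would actually fail, since for a fixed finite $\mathcal C$ each $\Delta_i\to0$ as $h\to0$. Your parameter paragraph, however, correctly fixes $h$ first, exactly as the paper does.
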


\begin{proof}
Let $\varepsilon>0$.  By Lemma~\ref{lem:weight}, first choose $h>0$ sufficiently small and then a finite set $\mathcal C$ of cells such that
\begin{equation}\label{eq:truncate}
 9\sum_{\substack{(i,j)\in\mathcal C\\i<j}}\Delta_i\Delta_j
 +\frac92\sum_{\substack{(i,i)\in\mathcal C}}\Delta_i^2
 >\frac{27}{2}-\varepsilon.
\end{equation}
This is possible because the positive series in \eqref{eq:weight} converges and its value tends to $3/2$ as $h\to0$.

Construct $\mathcal H_n$ from $\mathcal C$ as above.  Since only finitely many bins occur, all their primes are below $n$ for large $n$, and
\[
 |V(\mathcal H_n)|=O_{h,\mathcal C}(M)=o(S).
\]
Equations \eqref{eq:H-count} and \eqref{eq:truncate}, followed by Lemma~\ref{lem:linear}, give
\[
 |A_{\mathcal H_n}|
 =\pi(n)-|V(\mathcal H_n)|+|\mathcal H_n|
 \geq\pi(n)+\left(\frac{27}{2}-\varepsilon-o(1)\right)S.
\]
As $\varepsilon$ is arbitrary, the proposition follows.
\end{proof}

Propositions~\ref{prop:upper} and \ref{prop:lower} prove Theorem~\ref{thm:main}.

\end{document}